\newcommand{\beq}{\begin{equation}}
\newcommand{\eeq}{\end{equation}}
\newcommand\dirac{\slash\!\!\!\partial}
\newcommand\R{\mathbb R}
\newcommand\Z{\mathbb Z}
\newcommand{\NN}{{\mathbb N}}
\newcommand{\QQ}{{\mathbb Q}}
\newcommand{\Q}{{\mathbb Q}}
\newcommand{\RR}{{\mathbb R}}
\newcommand{\TT}{{\mathbb T}}
\newcommand{\ZZ}{{\mathbb Z}}
\newcommand\Range{\operatorname{Range}}
\newtheorem{theorem}{Theorem}
\newtheorem{conjecture}{Conjecture}
\theoremstyle{definition}
\newtheorem{definition}{Definition}
\theoremstyle{remark}
\newtheorem{remark}{Remark}
\definecolor{darkgreen}{cmyk}{1,0,1,.2}
\definecolor{m}{rgb}{1,0.1,1}
\definecolor{green}{cmyk}{1,0,1,0}
\definecolor{test}{rgb}{1,0,0}
\definecolor{cmyk}{cmyk}{0,1,1,0}
\begin{document}

\title[Proof of the magnetic gap-labelling conjecture for principal solenoidal tori]
{Proof of the magnetic gap-labelling conjecture for principal solenoidal tori}
\footnote{\today}
\author{Moulay Tahar Benameur}
\address{IMAG, Univ Montpellier, CNRS, Montpellier, France}
\email{moulay.benameur@umontpellier.fr}

\author{Varghese Mathai}
\address{Department of Pure Mathematics, University of Adelaide,
Adelaide 5005, Australia}
\email{mathai.varghese@adelaide.edu.au}

\begin{abstract}
In this note, we prove the magnetic spectral gap-labelling conjecture as stated in \cite{BM2018}, in all dimensions, for 
principal solenoidal tori. 
\end{abstract}

\keywords{magnetic Schr\"odinger operators, aperiodic potentials, quasicrystals,
spectral gap-labelling with nonzero magnetic field, operator K-theory, invariant Borel probability measure, trace, principal solenoidal tori}

\subjclass[2010]{Primary 58J50; Secondary 46L55, 46L80, 52C23, 19K14, 81V70}

\maketitle

\tableofcontents


\section{Introduction}
In a recent paper \cite{BM2018}, the authors formulated for the first time the magnetic gap-labelling conjecture (MGL). More precisely, given a constant magnetic field on Euclidean space $\RR^p$ determined by a skew-symmetric $(p\times p)$ matrix $\Theta$,  and a $\Z^p$-invariant probability measure $\mu$ on the disorder set $\Sigma$ which is by hypothesis a Cantor  set, where the action 
of $\Z^p$ is assumed to be minimal, the corresponding Integrated Density of States of any self-adjoint operator
affiliated to the twisted crossed product  algebra $C(\Sigma) \rtimes_\sigma \ZZ^p$, where $\sigma$ is the multiplier on 
$\ZZ^p$ associated to  $\Theta$, takes on values on spectral gaps in the {\em magnetic gap-labelling group} cf. Definition \ref{defn:magnetic-groups}. The {\em magnetic frequency group} cf. Definition \ref{defn:magnetic-groups} is defined as an explicit  
countable subgroup of $\RR$ involving Pfaffians of $\Theta$ and its sub-matrices. Our the magnetic gap-labelling conjecture states that the magnetic gap labelling group is a subgroup of the magnetic frequency group.
In \cite{BM2018}, we gave evidence for the validity of our conjecture in 2D, 3D, the Jordan block diagonal case and the periodic case in all dimensions. The goal of this note is to give a complete proof of   the magnetic gap-labelling conjecture  in all dimensions, for 
principal solenoidal tori, see Theorem \ref{thm:integral} in section \ref{sec:integrality}. 
This result is obtained as a byproduct of a stronger result relating the magnetic gap-labelling conjecture with the integrality of the Chern character. 

\medskip

Historically speaking, the gap-labelling theorem was originally conjectured by Bellissard \cite{Bellissard4} in the late 1980s. It concerns the labelling of gaps in the spectrum of a 
Schr\"odinger operator (in the absence of a magnetic field) by the elements of a subgroup of $\RR$ 
which results from pairing the $K_0$-group of the noncommutative analog for the Brillouin zone with the tracial state defined by the probability measure on the hull.
The problem arises in a mathematical version of solid state physics in the context of aperiodic tilings. 
Its three proofs, discovered independently by the authors of \cite{BO,KP,BBG} all concern the proof of a statement in K-theory. See also \cite{BO-JFA}.
Earlier results include the proof of the gap-labelling conjecture in 1D \cite{Bellissard5}, 2D \cite{BCL,VanEst} and in 3D \cite{BKL}.
We remark that Bellissard's conjecture requires that the  gap labelling group is equal to the frequency group, as opposed to the subgroup relationship
in the case when the magnetic field is non-zero.

\medskip

The main results of the paper are in sections \ref{sect:proof of MGL} and \ref{sec:integrality}. In section \ref{sect:proof of MGL}, 
a proof of MGL is given assuming the integrality of the Chern character, an independent  result, which  extends the results of \cite{BO-JFA} and which was expected by many experts in the area. In section \ref{sec:integrality}, the magnetic gap-labelling conjecture for principal solenoidal tori is proved in full generality as an application of the previous result, by showing that the 
Chern character is indeed integral in this principal case. The MGL conjecture remains open for general solenoidal tori, see comments after the statement of Theorem \ref{thm:integral}.

\medskip

\noindent{\it Acknowledgements}. The authors thank Dennis Sullivan for useful discussions and for providing an interesting topological proof of the integrality of the Chern character for tori, 
which is not included in the paper.
MB thanks the French National Research Agency for support via the ANR-14-CE25-0012-01 (SINGSTAR), and  VM thanks the Australian Research Council  for support
via the ARC Laureate Fellowship FL170100020.

\section{Conjectures about the magnetic gap-labelling group}\label{sec:MGLconj}

We review in this first section the magnetic gap-labelling conjecture as stated in \cite{BM2018}, that we shall call MGL.

\subsection{{Some notations}}\label{Notations}
Assume that $\Z^p$ acts minimally on a Cantor set $\Sigma$ and let $\mu$ be a $\Z^p$-invariant probability measure on $\Sigma$.
The subgroup of $\R$ which is generated by $\mu$-measures of clopen subspaces of $\Sigma$ is denoted $\Z[\mu]$. This is known as the group of frequencies of  the aperiodic potential associated with the quasi-crystal, i.e.  appearing in the Fourier expansion of that potential. It can also be seen as the image under (the integral associated with) the probability measure $\mu$ of $C(\Sigma, \Z)$, the group of continuous integer valued functions on $\Sigma$. That is,
$$
\Z[\mu] = \left\{\int_\Sigma f(z) d\mu(z) \Big| f\in C(\Sigma, \Z)   \right\} = \mu(C(\Sigma, \ZZ))
$$ 
Recall that if the group $\Z^p$ acts on a module $M$, then the {\em coinvariants} of $M$ is defined as the quotient 
$$
{{M_{\Z^p}}} := M/\{m-gm|m\in M, g\in\Z^p\},
$$ 
and the {\em invariants}  of $M$ is defined as 
$$
{{M^{\Z^p}}}:=\{m\in M | m=gm\,  \text{for all}\,\, g\in \Z^p\}.
$$

Let $I$ be an ordered subset of $\{1,\ldots,p\}$ with an even number of elements, 
and let, {{according to the previous notations}},  $C(\Sigma, \ZZ)_{\ZZ^{I^c}}$ denote the coinvariants of $C(\Sigma, \ZZ)$ under the action of the subgroup $\ZZ^{I^c}$
of $\ZZ^p$, where $I^c$ denotes the index set that is the complement to $I$. 
{{Let {{similarly}} $\left(C(\Sigma, \ZZ)_{\ZZ^{I^c}}\right)^{\ZZ^I}$ 
denote the subset of $C(\Sigma, \ZZ)_{\ZZ^{I^c}}$ composed of those $\ZZ^{I^c}$-coinvariant classes in $C(\Sigma, \ZZ)_{\ZZ^{I^c}}$ which are invariant under the induced action of the subgroup $\Z^I$, {{or equivalently invariant under the action of the whole group $\Z^p$}}. }}
Define 
$$
\ZZ_I[\mu] = \mu\left(\left(C(\Sigma, \ZZ)_{\ZZ^{I^c}}\right)^{\ZZ^I}\right).
$$
{{Notice that 
$$
\ZZ_{\{1,\cdots, p\}}[\mu]={\ZZ \;  \subset  \; \ZZ_I[\mu] \; \subset  \; \ZZ[\mu]} = \ZZ_{\emptyset}[\mu].
$$}}

\subsection{Labelling the gaps}
Let $\sigma$ be a multiplier of $\Z^p$ which is associated with the skew symmetric matrix $\Theta$. The $\Z^p$ invariant probability measure $\mu$ yields a regular trace $\tau^\mu$ on the twisted crossed product $C^*$-algebra $C(\Sigma)\rtimes_\sigma \Z^p$, which is by fiat the operator norm completion of the $*$-algebra of compactly supported continuous functions
$C_c (\Z^p\times \Sigma)$ acting via the left regular representation on the Hilbert space $L^2(\Sigma, d\mu) \otimes \ell^2(\ZZ^d)$.
The trace $\tau^\mu$ is defined  on the dense subalgebra $C_c (\Z^p\times \Sigma)$ by the equality 
\beq\label{trace}
\tau^\mu (f)=\left\langle\mu, f_0\right\rangle  = \int_\Sigma f_0(z) d\mu(z), \quad \text{  where  } f_0: z\mapsto f(0, z), 
\eeq
with $0$ the zero element of $\Z^p$. Hence $\tau^\mu$ induces a trace
$$
\tau^\mu (= \tau^\mu_*)\; : \; K_0(C(\Sigma)\rtimes_\sigma \Z^p ) \longrightarrow \R.
$$
The $C^*$-algebra $C(\Sigma)\rtimes_\sigma \Z^p$ (or rather a Morita equivalent one) is a receptacle for the spectral projections onto spectral gaps of the magnetic Schr\"{o}dinger operator associated with our system, as explained earlier. Any gap in its spectrum  may therefore be labelled by the trace of the corresponding projection.

\begin{definition}\label{defn:magnetic-groups}
{{The}}  {\em{magnetic gap-labelling group}} is defined to be the range of the trace $\tau^\mu$
$$
 \Range (\tau^\mu) \; := \; \tau^\mu\left(K_0(C(\Sigma)\rtimes_\sigma \Z^p )\right) \; \subset \; \R.
$$ 
  \\

The {\em magnetic frequency group} is defined to be the countable subgroup of $\R$ given by:
$$
\sum_{0\leq |I|\leq p, |I even} {\rm Pf}(\Theta_I)\Z_I[\mu],
$$
where ${\rm Pf}(\Theta_I)$ denotes the Pfaffian of the skew symmetric $(|I|\times |I|)$ submatrix $\Theta_I$ of $\Theta$, whose $(i,j)$ entry is $\Theta_{(i,j)}$  for all $i, j \in I$.
\end{definition}

We next recall the formulation of our magnetic gap labelling conjecture from \cite{BM2018}, and later give evidence for its validity.

\begin{conjecture}[{\bf Magnetic gap-labelling conjecture (MGL)}]\label{mainconj}
Let $\Sigma$ be a Cantor set with a minimal action of $\ZZ^p$ that preserves a Borel probability measure $\mu$.
Let $\sigma$ be the multiplier on $\ZZ^p$ associated to a skew-symmetric $(p\times p)$ matrix $\Theta$. \\
Then the magnetic gap-labelling group is contained in the magnetic frequency group.\\
\end{conjecture}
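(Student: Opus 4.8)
The plan is to reduce MGL to an \emph{integrality statement for the Chern character} on the solenoid attached to $(\Sigma,\Z^p)$, and then to establish that integrality in the principal case; for a general Cantor system the argument will stop exactly at the missing integrality input.

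\emph{Step 1 (geometric model and trace formula).} Let $V_\Sigma=(\Sigma\times\R^p)/\Z^p$ be the compact foliated solenoid, equipped with the $\R^p$-action induced by translation in the Euclidean factor. Since $\Z^p$ and $\R^p$ act freely and properly on $\Sigma\times\R^p$ with quotients $V_\Sigma$ and $\Sigma$ respectively, a standard imprimitivity/Morita equivalence identifies $C(\Sigma)\rtimes_\sigma\Z^p$ with the twisted crossed product $C(V_\Sigma)\rtimes_{\tilde\sigma}\R^p$, where $\tilde\sigma$ is the $\R^p$-multiplier determined by the constant matrix $\Theta$. A twisted Connes--Thom isomorphism (a constant multiplier is homotopic to the trivial one, so no $K$-theory is lost) then yields a natural isomorphism $\beta_\Theta$ of $K_0(C(\Sigma)\rtimes_\sigma\Z^p)$ with the topological $K$-theory $K^*(V_\Sigma)$ of the solenoid, in the degree congruent to $p$ modulo $2$. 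Following the strategy announced in the Introduction (the content of Section~\ref{sect:proof of MGL}), Connes' measured index theorem, applied leafwise with the projective/magnetic twist, expresses the trace as a pairing with the Ruelle--Sullivan current $[C_\mu]\in H_p(V_\Sigma,\R)$ of the transverse measure $\mu$:
$$
\tau^\mu_*(x)=\big\langle [C_\mu],\ \ch(\beta_\Theta x)\smile e^{\,\beta_\Theta}\big\rangle,
$$
where $\beta_\Theta\in H^2(V_\Sigma,\R)$ is the pullback along $V_\Sigma\to\T^p$ of the constant $2$-form $\tfrac12\sum_{i<j}\Theta_{ij}\,dx_i\wedge dx_j$.

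\emph{Step 2 (expansion, Pfaffians, and the groups $\Z_I[\mu]$).} Writing $e^{\beta_\Theta}=\sum_k\tfrac1{k!}\beta_\Theta^{\,k}$ and expanding the pairing over ordered subsets $I\subseteq\{1,\dots,p\}$ of even cardinality (only such $I$ survive, since $[C_\mu]$ has leafwise degree $p$), the elementary identity $\tfrac1{k!}\big(\tfrac12\sum\Theta_{ij}\,dx_i\,dx_j\big)^k=\sum_{|I|=2k}{\rm Pf}(\Theta_I)\,dx_I$ produces precisely the Pfaffians of the principal submatrices $\Theta_I$. The $I$-term equals ${\rm Pf}(\Theta_I)$ times the integral of $[C_\mu]$ against $dx_I$ wedged with the degree-$(p-|I|)$ component of $\ch(\beta_\Theta x)$, that is, a transverse-measure evaluation of a leafwise form in the complementary $I^c$-directions. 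Integrating over the $I^c$ leaf-directions passes the coefficient module $C(\Sigma,\Z)$ (for integral coefficients) to its $\Z^{I^c}$-coinvariants, and the remaining top-degree datum in the $I$-directions is classified by the $\Z^I$-invariants; hence, \emph{provided $\ch(\beta_\Theta x)$ lies in the image of $H^*(V_\Sigma,\Z)\to H^*(V_\Sigma,\R)$} --- which suffices because $H^*(V_\Sigma,\Z)\cong H^*(\Z^p;C(\Sigma,\Z))$ --- the $I$-term lies in ${\rm Pf}(\Theta_I)\cdot\mu\big((C(\Sigma,\Z)_{\Z^{I^c}})^{\Z^I}\big)={\rm Pf}(\Theta_I)\,\Z_I[\mu]$. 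Summing over $I$ gives $\Range(\tau^\mu)\subseteq\sum_{0\le|I|\le p}{\rm Pf}(\Theta_I)\,\Z_I[\mu]$, which is MGL.

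\emph{Step 3 (integrality of the Chern character in the principal case).} In the principal solenoidal case $V_\Sigma=\varprojlim_n\T^p$ is an inverse limit of $p$-tori along surjective group homomorphisms (equivalently $\Sigma$ is a profinite abelian group and $\Z^p$ acts by translations through a dense embedding $\Z^p\hookrightarrow\Sigma$). For a single torus the Chern character is integral: $K^*(\T^p)$ is free abelian, the Atiyah--Hirzebruch spectral sequence degenerates, and $\ch$ maps $K^*(\T^p)$ isomorphically onto $H^*(\T^p,\Z)$ (graded by parity). Passing to the limit, continuity of topological $K$-theory and of \v{C}ech cohomology gives $K^*(V_\Sigma)=\varinjlim K^*(\T^p)$, $H^*(V_\Sigma,\Z)=\varinjlim H^*(\T^p,\Z)$ and $H^*(V_\Sigma,\R)=\varinjlim H^*(\T^p,\R)$, so naturality of $\ch$ forces $\ch(K^*(V_\Sigma))$ into the image of $H^*(V_\Sigma,\Z)$ in $H^*(V_\Sigma,\R)$. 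Combined with Steps~1--2 this proves MGL for principal solenoidal tori.

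\emph{Main obstacle.} The decisive point --- and the reason the argument does not settle Conjecture~\ref{mainconj} for an arbitrary Cantor system --- is the integrality of the Chern character on a general solenoid: without the inverse-limit-of-tori description there is no reason $\ch(K^*(V_\Sigma))$ should avoid denominators, since nontrivial Atiyah--Hirzebruch differentials or extension problems could a priori intervene. It is worth stressing that the (already known) non-magnetic gap-labelling theorem only requires the \emph{top}-degree Chern class to be $\mu$-integral, whereas the magnetic statement forces \emph{all} degrees to be integral because of the Pfaffian terms, which is why even solenoids covered by the classical gap-labelling theorem are not obviously covered here. A secondary, more technical hurdle is to make Steps~1--2 rigorous: the compatibility of the twisted Connes--Thom isomorphism with $\tau^\mu$ (the magnetic-deformation step) and the precise homological bookkeeping identifying $(C(\Sigma,\Z)_{\Z^{I^c}})^{\Z^I}$ with the appropriate graded piece of $H^*(V_\Sigma,\Z)$ must be handled with enough care that each contribution lands in $\Z_I[\mu]$ rather than in a rational overgroup.
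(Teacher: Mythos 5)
Your proposal follows essentially the same route as the paper: reduce MGL to an integrality statement for the Chern character via the twisted index theorem of \cite{BM2018}, expand $e^{\beta_\Theta}$ to produce the Pfaffians ${\rm Pf}(\Theta_I)$ and show each $I$-component of the trace pairing lands in ${\rm Pf}(\Theta_I)\,\Z_I[\mu]$, and then verify the integrality hypothesis for principal solenoidal tori by writing $X_\infty=\varprojlim \TT^n$ and invoking continuity of $K$-theory and of \v{C}ech cohomology under inverse limits together with integrality of $\ch$ on a single torus. Your Step 3 and your identification of the general (non-principal) case as the remaining obstacle match the paper exactly; the paper records the same caveat after Theorem \ref{thm:integral}, and it gives three independent proofs of torus integrality (Pontryagin--Thom collapse, suspension splitting, index theory) where you give a one-line Atiyah--Hirzebruch argument, but the conclusion is the same standard fact.

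The one place where you assert what the paper actually has to prove is the middle of your Step 2: the claim that, granted integrality of $\ch$, the $I$-th term of the pairing lies in $\mu\bigl((C(\Sigma,\Z)_{\Z^{I^c}})^{\Z^I}\bigr)$ rather than merely in $\mu\bigl(C(\Sigma,\Z)_{\Z^p}\bigr)=\Z[\mu]$. Your justification --- that integrating over the $I^c$ leaf-directions passes the coefficients to $\Z^{I^c}$-coinvariants while the top-degree datum in the $I$-directions is classified by $\Z^I$-invariants --- is a plausibility argument, not a proof, and this is precisely the content of the paper's Theorem \ref{MGL2}. There the statement becomes: the cup product $\cup\psi_I\colon H^{p-2k}(\Z^p,C(\Sigma,\Z))\to H^{p}(\Z^p,C(\Sigma,\Z))\simeq C(\Sigma,\Z)_{\Z^p}$ factors through $(C(\Sigma,\Z)_{\Z^{I^c}})^{\Z^I}$, and it is proved by induction on $p$, peeling off one generator $T_{i_j}$ at a time using the short exact sequences $0\to H^{*-1}(H,M)_{\langle T\rangle}\to H^{*}(H\times\langle T\rangle,M)\to H^{*}(H,M)^{\langle T\rangle}\to 0$ and checking at each stage that $\cup\psi_{i_j}$ kills the coinvariant subgroup and restricts to the invariant quotient. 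You correctly flag this as a hurdle but leave it unfilled; since it is the only genuinely new cohomological argument in the paper's proof of the conditional theorem, it should be regarded as the missing step rather than a secondary technicality. Everything else in your write-up is consistent with what the paper does.
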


{{When the magnetic field is trivial, the magnetic gap-labelling group coincides with the now classical gap-labelling group, while the magnetic frequency group reduces to the group $\ZZ[\mu]=\ZZ_\emptyset [\mu]$ recalled in Subsection \ref{Notations}. In this case, our conjecture does reduce to the Bellissard conjecture \cite{Bellissard4} which predicts the equality of these two groups. Indeed,  $\ZZ[\mu]$ is then obviously contained in the gap-labelling group, see \cite{BO, BO-JFA}.
}}

\section{{{Proof of MGL, assuming integrality of the Chern character}}}\label{sect:proof of MGL}

{\bf The integrality hypothesis}:
{\em \text{(IH)} \;  The range of the Chern character  $Ch: K^p(X) \longrightarrow H^{[p]}(X,\Q)\simeq \oplus_{k\geq 0} H^{p+2k} (\Z^p, C(\Sigma, \Q))$\text{ is contained in } $H^{p+2k} (\Z^p, C(\Sigma, \Z)).$}\\

We will show in the last section that (IH) is satisfied for principal solenoidal tori in all dimensions.

\begin{theorem}\label{MGL2}
Suppose that the integralty hypothesis {\rm{(IH)}} stated above is satisfied, then Conjecture  \ref{mainconj}, is true. 
\end{theorem}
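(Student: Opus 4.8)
The plan is to deform away the multiplier, re-express $\tau^\mu$ as a pairing with cohomology, and then invoke the integrality hypothesis (IH).

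\emph{Deforming the twist.} First I would connect $\sigma=\sigma_\Theta$ to the trivial multiplier along the path $\sigma_t:=\sigma_{t\Theta}$, $t\in[0,1]$. The associated continuous field $\bigl(C(\Sigma)\rtimes_{\sigma_t}\Z^p\bigr)_t$ has $K$-theory independent of $t$ (the twisted group algebra of $\Z^p$ is a noncommutative torus, and the whole family is $KK$-equivalent to its untwisted member), yielding an isomorphism $K_0(C(\Sigma)\rtimes_\sigma\Z^p)\cong K_0(C(\Sigma)\rtimes\Z^p)$; composing with Green imprimitivity and Connes' Thom isomorphism one reaches $K^p(X)$, where $X$ is the principal solenoidal torus $(\Sigma\times\R^p)/\Z^p$. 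This chain of isomorphisms is not trace-preserving; the discrepancy is governed by the curvature of the path $\sigma_t$, and the resulting ``magnetic'' Connes pairing --- which is exactly the computation behind the definition of the magnetic frequency group in \cite{BM2018} --- has the form
\[
\tau^\mu(x)=\bigl\langle \operatorname{Ch}(x)\cup e^{\omega_\Theta},\,[C_\mu]\bigr\rangle ,\qquad x\in K^p(X),
\]
where $\omega_\Theta\in H^2(X,\R)$ is pulled back along $X\to\T^p$ from the constant $2$-form $\tfrac12\sum_{i,j}\Theta_{ij}\,dx_i\wedge dx_j$, and $[C_\mu]$ is the Ruelle--Sullivan current of $\mu$, i.e.\ the functional on $H^p(X,\R)=H^p(\Z^p,C(\Sigma,\R))=C(\Sigma,\R)_{\Z^p}$ induced by $\mu$.

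\emph{Expanding the magnetic exponential.} Using the identity $\tfrac1{k!}\,\omega_\Theta^{\,k}=\sum_{|I|=2k}\operatorname{Pf}(\Theta_I)\,[dx_I]$, summed over ordered $2k$-element subsets $I\subseteq\{1,\dots,p\}$ with $[dx_I]=[dx_{i_1}\wedge\cdots\wedge dx_{i_{2k}}]\in H^{|I|}(X,\Z)$, one gets $e^{\omega_\Theta}=\sum_{|I|\ \mathrm{even}}\operatorname{Pf}(\Theta_I)\,[dx_I]$ and hence
\[
\tau^\mu(x)=\sum_{0\le|I|\le p}\operatorname{Pf}(\Theta_I)\,\bigl\langle \operatorname{Ch}(x)\cup[dx_I],\,[C_\mu]\bigr\rangle .
\]
It therefore suffices to show that for each (even) $I$ the summand $\bigl\langle \operatorname{Ch}(x)\cup[dx_I],\,[C_\mu]\bigr\rangle$ lies in $\Z_I[\mu]$.

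\emph{The homological core.} Fix an even $I$ and split $\Z^p=\Z^I\times\Z^{I^c}$. I would analyze the functional $\bigl\langle(-)\cup[dx_I],\,[C_\mu]\bigr\rangle$ on $H^{p-|I|}(\Z^p,C(\Sigma,\R))$ via the Lyndon--Hochschild--Serre spectral sequence of the projection $\Z^p\to\Z^I$: since $\Z^{I^c}$ has cohomological dimension $p-|I|$, top-degree Poincar\'e duality gives $H^{p-|I|}(\Z^{I^c},C(\Sigma,\R))=C(\Sigma,\R)_{\Z^{I^c}}$, and a projection-formula computation (in the spirit of the classical gap-labelling proofs \cite{BBG,BO}) identifies cup product with $[dx_I]$ followed by pairing with $[C_\mu]$ with the composite
\[
H^{p-|I|}\bigl(\Z^p,C(\Sigma,\R)\bigr)\ \xrightarrow{\ \mathrm{res}\ }\ \bigl(C(\Sigma,\R)_{\Z^{I^c}}\bigr)^{\Z^I}\ \xrightarrow{\ \overline\mu\ }\ \R ,
\]
namely restriction to the subgroup $\Z^{I^c}$ (whose image automatically consists of $\Z^I$-invariants) followed by the functional $\overline\mu$ induced by $\mu$ (well defined because $\mu$ is $\Z^p$-invariant). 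Reconciling cup products with this spectral sequence and with inflation is the one genuinely delicate point, and I expect the bulk of the work to lie there.

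\emph{Conclusion, and the main obstacle.} By (IH) the image of $\operatorname{Ch}$ sits inside the integral lattice $H^*(\Z^p,C(\Sigma,\Z))\subset H^*(\Z^p,C(\Sigma,\Q))$, so $\mathrm{res}(\operatorname{Ch}(x))\in\bigl(C(\Sigma,\Z)_{\Z^{I^c}}\bigr)^{\Z^I}$ and therefore $\bigl\langle \operatorname{Ch}(x)\cup[dx_I],[C_\mu]\bigr\rangle\in\overline\mu\bigl(\bigl(C(\Sigma,\Z)_{\Z^{I^c}}\bigr)^{\Z^I}\bigr)=\Z_I[\mu]$. Feeding this into the formula of the second step and summing over $I$ gives $\tau^\mu(x)\in\sum_{0\le|I|\le p}\operatorname{Pf}(\Theta_I)\,\Z_I[\mu]$; since $x$ is an arbitrary class in $K_0(C(\Sigma)\rtimes_\sigma\Z^p)$, the magnetic gap-labelling group is contained in the magnetic frequency group, which is Conjecture \ref{mainconj}. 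The principal obstacle is the interlocking of two nontrivial ingredients in arbitrary dimension $p$: the twisted (magnetic) trace formula of the first step, and the spectral-sequence/projection-formula bookkeeping of the third step; granting these, (IH) yields the conclusion immediately.
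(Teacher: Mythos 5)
Your proposal is correct and follows essentially the same route as the paper: both reduce, via the twisted index theorem of \cite{BM2018} and the Pfaffian expansion of the magnetic exponential, to the single cohomological claim that cupping with $\psi_I$ carries $H^{p-|I|}(\Z^p,C(\Sigma,\Z))$ into $\left(C(\Sigma,\Z)_{\Z^{I^c}}\right)^{\Z^I}$, and then apply (IH). The only difference is organizational --- the paper establishes that claim by induction on $p$ using Wang-type short exact sequences that peel off one generator $T_{i_j}$ at a time, which is just the iterated, degenerate form of the Lyndon--Hochschild--Serre spectral sequence for $\Z^{I^c}\to\Z^p\to\Z^I$ that you invoke in one step.
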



\begin{proof}

By using the twisted index theorem (cf. \cite{BM2018}), we are reduced to proving that for a given multi-index $I:=(i_1, \cdots, i_{2k})$, the range of the map
$$
\cup \psi_I:= \cup (\psi_{i_1}\cup \cdots\cup \psi_{i_{2k}}) : H^{p-2k} (\Z^p, C(\Sigma, \Z))\longrightarrow H^{p} (\Z^p, C(\Sigma, \Z))\simeq C(\Sigma, \Z)_{\Z^p},
$$
lies inside $\left(C(\Sigma, \Z)_{\Z^{I^c}}\right)^{I}$. 

Our method is an induction on $p$. We denote by $(T_i)_{1\leq i\leq p}$ the generators of the action of $\Z^p$ on $\Sigma$. We have the following exact sequences (cf. \cite{Brown}
\begin{multline*}
0\to H^{p-2k-1} (<T_{i}, i\neq i_2>, C(\Sigma, \Z))_{<T_{i_2}>} \longrightarrow H^{p-2k} ( \Z^p, C(\Sigma, \Z)) \longrightarrow \\ H^{p-2k} (<T_{i}, i\neq i_2>, C(\Sigma, \Z))^{<T_{i_2}>} \to 0
\end{multline*}
and 
\begin{multline*}
0\to  H^{p-2k-1} (<T_{i}, i\neq i_1, i_2>, C(\Sigma, \Z))_{<T_{i_1}>} \longrightarrow H^{p-2k} (<T_{i}, i\neq i_2>, C(\Sigma, \Z))  \longrightarrow \\ H^{p-2k} (<T_{i}, i\neq i_1, i_2>, C(\Sigma, \Z))^{<T_{i_1}>}\to 0.
\end{multline*}
It is then clear that the map $\cup \psi_I$ vanishes on the range of $H^{p-2k-1} (<T_{i_j}, j\neq 2>, C(\Sigma, \Z))_{<T_{i_2}>}$ in the first exact sequence and yields a well defined morphism
$$
\alpha_I: H^{p-2k} (<T_{i}, i\neq i_2>, C(\Sigma, \Z))^{<T_{i_2}>} \longrightarrow C(\Sigma, \Z)_{\Z^p}.
$$
Moreover, this latter map is the restriction of $\cup (\psi_{i_2}\cup\cdots \cup \psi_{i_{2k}})$ to the invariants  $H^{p-2k} (<T_{i}, i\neq i_2>, C(\Sigma, \Z))^{<T_{i_2}>}$. Now a careful inspection of the second exact sequence allows to deduce that $\cup (\psi_{i_2}\cup\cdots \cup \psi_{i_{2k}})$ vanishes on the range of $H^{p-2k-1} (<T_{i}, i\neq i_1, i_2>, C(\Sigma, \Z))_{<T_{i_1}>}$ in this second exact sequence. Therefore, again the map $\cup (\psi_{i_2}\cup\cdots \cup \psi_{i_{2k}})$ induces a well defined morphism from $\left(H^{p-2k} (<T_{i}, i\neq i_1, i_2>, C(\Sigma, \Z))^{<T_{i_1}>}\right)^{<T_{i_1}>}$ to $C(\Sigma, \Z)_{<T_{i}, i\neq i_2>}$, which is easily seen to coincide with the restriction to 
$$
\left(H^{p-2k} (<T_{i}, i\neq i_1, i_2>, C(\Sigma, \Z))^{<T_{i_1}>}\right)^{<T_{i_1}>}
$$ 
of the morphism
$$
\cup(\psi_{i_3}\cup  \cdots\cup \psi_{i_{2k}}) : H^{p-2k} (<T_{i}, i\neq i_1, i_2>, C(\Sigma, \Z)) \longrightarrow H^{p-2} (<T_{i}, i\neq i_1, i_2>, C(\Sigma, \Z)).
$$
Notice that $H^{p-2} (<T_{i}, i\neq i_1, i_2>, C(\Sigma, \Z))\simeq C(\Sigma, \Z)_{<T_{i}, i\neq i_1, i_2>}$. Applying the induction hypothesis, we can deduce that the range of this latter factorizes through  
$$
\left(C(\Sigma, \Z)_{<T_{i}, i\neq i_1,\cdots, i_{2k}>}\right)^{<T_{i_j}, j=3,\cdots, 2k >}.
$$
So the restriction to the invariants under $T_{i_1}$ and $T_{i_2}$, allows to deduce that the range of the original map $\cup \psi_I$ is contained in 
$$
\left( C(\Sigma, \Z)_{<T_{i}, i\neq i_1,\cdots, i_{2k}>}\right)^{<T_{i_j}, j=1,\cdots, 2k >},
$$
hence the conclusion. 
\end{proof}

\section{Integrality of the Chern character: the case of the torus}

{{The integrality of the Chern character for the torus is standard. To be self-contained, and in the spirit of our approach to  the gap-labelling problem, we give a proof based on the index theorem which could be known to the experts.  }}
For $n$ even, let $E\to \TT^n$ be a complex vector bundle over the torus, and $\dirac_E$ denote the 
Dirac operator twisted by a connection on $E$.  Then since the torus is flat, the Atiyah-Singer index theorem \cite{AS}
says that 
$$
index(\dirac_E) = \int_{\TT^n} Ch(E).
$$
We deduce that the top degree component of the Chern character $Ch_n(E)$ is integral.

Let $L_{ij}\to \TT^n$ denote the line bundle over the torus such that $c_1(L_{ij})= dx_i \wedge dx_j$. 
Then 
$$
index(\dirac_{E\otimes L_{ij}} ) = \int_{\TT^n} Ch(E) (1+ dx_i\wedge dx_j)
$$
since $(dx_i\wedge dx_j)^k = 0$ for $k>1$. Since this is true for all $ 1\le i, j \le n$,  we conclude that the $(n-2)$ degree component of the Chern character,  $Ch(E)_{n-2}$ is integral. By induction, we see that all the components of $Ch(E)$ are integral.

In the case when $n$ is odd, consider the path of self adjoint Dirac operators $\dirac_t, \, t\in [0,1]$, where $\dirac_t = (1-t)\dirac \otimes I_N
+ t U (\dirac \otimes I_N) U^*$, where $U: \TT^n \to U(N)$ is a smooth map that represents an element in $K^1(\TT^n)$. Then the spectral flow of the family 
$\dirac_t, \, t\in [0,1]$ is given by the Atiyah-Singer index theorem \cite{AS}, \cite{Getzler}
$$
SF(\dirac_t) = \int_{\TT^n} Ch(U) 
$$
Again we conclude that the top degree component of the Chern character $Ch_n(U)$ is integral.

Let $\dirac_{L_{ij} t}, \, t\in [0,1]$ denote the path of self adjoint Dirac operators $$(1-t)\dirac_{L_{ij}} \otimes I_N
+ t U (\dirac_{L_{ij}} \otimes I_N) U^*.$$ Then the spectral flow of the family 
$\dirac_{L_{ij} t}, \, t\in [0,1]$ is given by the Atiyah-Singer index theorem,
$$
SF(\dirac_{L_{ij} t}) = \int_{\TT^n} Ch(U) (1+ dx_i\wedge dx_j)
$$
Since this is true for all $ 1\le i, j \le n$,  we conclude that the $(n-2)$ degree component of the Chern character,  $Ch(U)_{n-2}$ is integral. By induction, we see that all the components of $Ch(U)$ are integral.

  \section{Integrality of the Chern character for principal solenoidal tori}\label{sec:integrality}
  
  For each $j\in\NN$, let $X_j = \TT^n$. 
Define the finite regular covering map $f_{j+1} :X_{j+1} \to X_{j}$ to be
a finite covering map such that degree of $f_{j+1} $ is greater than 1 for all $j$. For example, let 
$f_{j+1} (z_1,...,z_n)=(z_1^{p_1},...,z_n^{p_n})$,where
each $p_j \in \NN\setminus \{1\}$. Set $(X_\infty,f_\infty)$ to be the inverse limit,
$\varprojlim (X_j,f_j)$. Then $X_\infty$ is a solenoid manifold, and $X_\infty \subset \prod_{j\in\NN} X_j$, where the right hand side  is a compact space when given the
Tychonoff topology, therefore $X_\infty$ is also compact. 
 Let $G_j = \ZZ^n/\Gamma_j$ be the finite covering space group of the finite cover 
 $p_j : X_j \to X_1 = \TT^n$.
Then the inverse limit $G_\infty= \varprojlim G_j = \varprojlim \ZZ^n/\Gamma_j$ is a profinite
Abelian group that is homeomorphic to the Cantor set, cf. Lemma 5.1 in  \cite{McCord}.
Moreover $G_\infty \to X_\infty \to \TT^n$ is a principal fibre bundle, cf. Theorem 5.6 in \cite{McCord}.
We call such an $X$ a {\em principal solenoidal torus}.

Now K-theory is continuous under taking inverse limits in the category of compact Hausdorff spaces, which follows from Proposition 6.2.9 in \cite{WO}, see also \cite{Phillips}, 
 $$
 K^{\bullet}(X_\infty) \cong \varinjlim K^{\bullet}(X_j). 
 $$
Now the Chern character 
 $$
 Ch : K^{\bullet}(X_j)  \to H^{\bullet}(X_j,\ZZ) 
 $$
 maps to integral cohomology, as shown earlier, since $X_j$ is a torus. Therefore 
 $$ 
 \varinjlim Ch :   K^{\bullet}(X_\infty) \cong\varinjlim K^{\bullet}(X_j)  \to  \varinjlim H^{\bullet}(X_j,\ZZ). \cong  H^{\bullet}(X_\infty, \ZZ)
 $$ 
 by the continuity for \v{C}ech cohomology under taking inverse limits in the category of compact Hausdorff spaces, cf. \cite{Spanier}.
Now $H^{\bullet}(X_j,\ZZ)$ are torsionfree Abelian groups, therefore the direct limit $\varinjlim H^{\bullet}(X_j,\ZZ)$  is again a torsionfree Abelian group.
 So we have proved the following,
 
 \begin{theorem}[Integrality of the Chern character]\label{thm:integral}
 Let $X$ be a principal solenoidal manifold over $\TT^n$ as above. Then the Chern character,
 $$ 
 \varinjlim  Ch :   K^\bullet(X_\infty)  \to    H^{\bullet}(X_\infty,\QQ)
 $$ 
  is integral, that is, the range is contained in $H^{\bullet}(X_\infty,\ZZ)$. 
 \end{theorem}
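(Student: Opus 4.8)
\medskip

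\noindent\textbf{Plan of proof.} The strategy is to deduce the solenoidal statement formally from the torus case, already established in the preceding section, by passing to direct limits along the defining tower $\cdots\to X_{j+1}\to X_j\to\cdots\to X_1=\TT^n$ with bonding maps $f_{j+1}$. The only substantive inputs are three: the integrality of the Chern character on each torus $X_j=\TT^n$; continuity of topological $K$-theory under inverse limits of compact Hausdorff spaces, $K^\bullet(X_\infty)\cong\varinjlim K^\bullet(X_j)$ (Proposition 6.2.9 in \cite{WO}, see also \cite{Phillips}); and continuity of \v{C}ech cohomology under the same inverse limits, $H^\bullet(X_\infty,A)\cong\varinjlim H^\bullet(X_j,A)$ for $A=\ZZ$ and $A=\QQ$ (\cite{Spanier}).

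First I would record naturality of the Chern character under the covering maps, $Ch\circ f_{j+1}^{*}=f_{j+1}^{*}\circ Ch$, so that $(Ch)_j$ is a morphism of direct systems $\{K^\bullet(X_j)\}\to\{H^\bullet(X_j,\QQ)\}$. By the torus case, the image of $Ch$ on $K^\bullet(X_j)$ lies in the image of $H^\bullet(X_j,\ZZ)\to H^\bullet(X_j,\QQ)$; since $H^\bullet(\TT^n,\ZZ)$ is free abelian this map is injective, so $Ch$ on $X_j$ factors through the subgroup $H^\bullet(X_j,\ZZ)$. Because filtered colimits are exact and preserve monomorphisms and images, $\varinjlim Ch$ carries $\varinjlim K^\bullet(X_j)$ into $\varinjlim H^\bullet(X_j,\ZZ)\hookrightarrow\varinjlim H^\bullet(X_j,\QQ)$.

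Next I would invoke the two continuity theorems to rewrite this as a map $K^\bullet(X_\infty)\to H^\bullet(X_\infty,\QQ)$ with image inside $H^\bullet(X_\infty,\ZZ)$. That this map is the genuine (topological) Chern character of the compact space $X_\infty$ needs no separate construction: it is forced by naturality of $Ch$ under the projections $X_\infty\to X_j$ together with the universal property of the colimit. Finally I would verify that ``the range is contained in $H^\bullet(X_\infty,\ZZ)$'' is a well-posed assertion, i.e.\ that the natural map $H^\bullet(X_\infty,\ZZ)\to H^\bullet(X_\infty,\QQ)$ is injective: each $H^\bullet(X_j,\ZZ)\cong\ZZ^{2^n}$ is torsion-free, a filtered colimit of torsion-free abelian groups is torsion-free, and $-\otimes\QQ$ commutes with filtered colimits, so $\varinjlim H^\bullet(X_j,\ZZ)$ is torsion-free and embeds into $\varinjlim\bigl(H^\bullet(X_j,\ZZ)\otimes\QQ\bigr)\cong\varinjlim H^\bullet(X_j,\QQ)$. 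This gives Theorem \ref{thm:integral}, and together with Theorem \ref{MGL2} it establishes (IH), hence the magnetic gap-labelling conjecture, for principal solenoidal tori.

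I do not anticipate a real obstacle here: all the force is carried by the cited continuity results and by the torus integrality of the previous section. What requires care is only organizational bookkeeping — ensuring that the identifications of $K$-theory, of integral and of rational cohomology, and of the Chern character itself are all induced by the \emph{same} system of maps (the pullbacks $f_{j+1}^{*}$, equivalently the projections $X_\infty\to X_j$), so that the colimit diagram commutes on the nose, and recording the torsion-freeness above, which is precisely what makes the word ``integral'' meaningful for the non-manifold $X_\infty$.
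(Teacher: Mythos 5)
Your proposal is correct and follows essentially the same route as the paper: integrality on each torus $X_j$, continuity of $K$-theory and of \v{C}ech cohomology under inverse limits of compact Hausdorff spaces, and torsion-freeness of the colimit $\varinjlim H^\bullet(X_j,\ZZ)$ to make the integrality assertion meaningful. The extra bookkeeping you supply (naturality of $Ch$ along the bonding maps and injectivity of $H^\bullet(X_\infty,\ZZ)\to H^\bullet(X_\infty,\QQ)$) is implicit in the paper's argument.
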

 In particular, the integrality hypothesis (IH) of section \ref{sect:proof of MGL} is true in this case.\\

 Conversely, let $\Sigma$ be a Cantor set. Then $\Sigma$ can be made into a profinite abelian group $G$ (in many ways, cf. \cite{Verjovsky2014}).
 Let $\rho: \ZZ^n \to G$ be a homomorphism with dense range. 
 Then we can form the principal $G$ bundle, 
 $G\to X\to \TT^n$, where $X= \RR^n \times_\rho G$.

 \begin{remark}
 Consider the countable nested family of finite index normal subgroups of $\ZZ$ defined by $p^j\ZZ, \, j \in \NN$. Then the inverse limit 
 $\varprojlim \ZZ/p^j\ZZ = \ZZ_p$ is the profinite group of p-adic integers and hence is a Cantor set. Consider $f_j : S^1 \to S^1$ defined by $f_j(z) = z^{p^j}$, whose kernel is 
 isomorphic to $\ZZ/p^j\ZZ$. Let $X=  \varprojlim  (S^1, f_j)$. Then $X$ is a principal $ \ZZ_p$-bundle over $S^1$
 , cf. \cite{McCord}, and is also an explicit example of a principal solenoidal torus.
 \end{remark}
 
  \begin{remark}
 There is an interesting connection between the general case and {\em number theory}. The Cantor group is 
the profinite completion $\widehat \ZZ^n$ of the Abelian lattice $\ZZ^n$.
 In number theory, $\widehat \ZZ^n$  is identified with the
Galois group of infinite separable field extensions, cf. \cite{Waterhouse}.

Therefore understanding the outer automorphism group of the Cantor
group is equivalent to understanding the outer automorphism group of the Galois group 
$\widehat \ZZ^n$. 
 \end{remark}

 A general minimal action of $\ZZ^n$ on $\Sigma$ is however more general, being a homomorphism $\phi: \ZZ^n \to {\rm Homeo}(\Sigma) = {\rm Aut}(G)$
 with dense orbits on $\Sigma$. It gives rise to a general solenoidal torus, that isn't necessarily principal. A solenoidal torus is an example of a solenoidal manifold as
 defined in \cite{Sullivan2014,Verjovsky2014} where some interesting results are proved regarding these.
 It remains an open question whether the Chern character is integral in this general case.



\begin{thebibliography}{10}

\bibitem{AS}
M.~F. Atiyah and I.~M. Singer,  {The index of elliptic operators. {III}},
  Ann. of Math. (2) \textbf{87} (1968), 546--604.
MR0236952

\bibitem{Bellissard4} J.~Bellissard, Gap labelling theorems for Schr\"{o}dinger's 
operators. In: {From Number Theory to Physics} pp. 538--630, Les
Houches March 89, Springer, J.M. Luck, P. Moussa \& M. Waldschmidt Eds., (1992). 
MR1221111

\bibitem{Bellissard5}  J.~Bellissard, A.~Bovier, J.-M.~Ghez, Gap labelling theorems for one 
dimensional discrete Schr\"{o}dinger operators. { Rev. of Math. 
Phys.}, {\bf 4}, (1992), 1--37. MR1160136  

\bibitem{BCL} J. Bellissard, E. Contensous, A. Legrand,
{$K-$th\'eorie des quasicristaux, image par la trace: le cas du 
 r\'eseau octogonal}.
C.R.A.S t.327, Serie I, (1998), 197--200.
MR1646928

\bibitem{BBG}
 J. Bellissard, R. Benedetti, J-M. Gambaudo, 
Spaces of tilings, finite telescopic approximations and gap-labelling. 
Comm. Math. Phys. {\bf 261} (2006), no. 1, 1--41. 
MR2193205 

\bibitem{BKL} J. Bellissard, J. Kellendonk, A. Legrand,
Gap-labelling for three-dimensional aperiodic solids. C. R. Acad. Sci. Paris SŽr. I Math. {\bf 332} (2001), no. 6, 521--525.
MR1834062


\bibitem{BM2018}
M.-T. Benameur and V. Mathai, Gap-labelling conjecture with nonzero magnetic field. Adv. Math. {\bf 325} (2018), 116--164.
MR3742588 

\bibitem{BO}
M-T. Benameur, H. Oyono-Oyono,  Gap-labelling for quasi-crystals (proving a conjecture by J. Bellissard). In: Operator algebras and mathematical physics (Constanza, 2001), pp. 11--22, Bucharest, 2003. 
MR2018220

\bibitem{BO-JFA}
M.-T. Benameur, H. Oyono-Oyono,  Index theory for quasi-crystals. I. Computation of the gap-label group. J. Funct. Anal. 252 (2007), no. 1, 137--170.
MR2357353

\bibitem{Brown}
K. Brown,  Cohomology of groups. Graduate Texts in Mathematics, {\bf 87}, Springer-Verlag, New York-Berlin, 1982.
MR0672956



\bibitem{Getzler}
E. Getzler, The odd Chern character in cyclic homology and spectral flow. Topology {\bf 32} (1993), no. 3, 489--507.
MR1231957  

\bibitem{Hatcher}
A. Hatcher, Algebraic Topology. Cambridge University Press, Cambridge (2002).
MR1867354 

\bibitem{KP}
J. Kaminker, I. Putnam, A proof of the gap labelling conjecture. 
Michigan Math. J. {\bf 51} (2003), no. 3, 537--546. 
MR2021006


\bibitem{McCord}
M. C. McCord, 
Inverse limit sequences with covering maps. 
Trans. Amer. Math. Soc. {\bf 114} (1965) 197--209. 
MR0173237

\bibitem{Phillips}
N. C. Phillips,  Representable K-theory for $\sigma$-C*-algebras. K-Theory {\bf 3} (1989), no. 5, 441--478.
MR1050490

\bibitem{Spanier}
E. H. Spanier,  Algebraic topology. McGraw-Hill Book Co., New York-Toronto, Ont.-London 1966.
MR0210112

\bibitem{Sullivan2014}
D. Sullivan, Solenoidal manifolds. J. Singul. {\bf 9} (2014), 203--205. 
MR3249058 

\bibitem{VanEst} A.~van Elst, Gap-labelling theorems for {S}chr\"odinger
operators on the square and cubic lattice. { Rev. Math. Phys.}, {\bf 6}, (1994), 319--342. 
MR1269302


\bibitem{Verjovsky2014}
A. Verjovsky, Commentaries on the paper Solenoidal manifolds by Dennis Sullivan [MR3249058]. J. Singul. {\bf 9} (2014), 245--251.
 MR3249062
 
 \bibitem{Waterhouse}
 W. C. Waterhouse,
Profinite groups are Galois groups. 
Proc. Amer. Math. Soc. {\bf 42} (1973), 639--640. 
MR0325587

 \bibitem{WO}
N. E. Wegge-Olsen, 
K-theory and C*-algebras. 
A friendly approach. Oxford Science Publications. The Clarendon Press, Oxford University Press, New York, 1993.
MR1222415 

\end{thebibliography}
\end{document}